\theoremstyle{plain}
\newtheorem{lem}{Lemma}[section]
\newtheorem{thm}{Theorem}[section]
\theoremstyle{definition}
\newtheorem{defn}{Definition}[section]
\theoremstyle{remark}
\begin{document}

\title
[On local time for the solution to a white noise driven heat equation]
{On local time for the solution to a white noise driven heat equation}

\author{Izyumtseva O.L.}
\address{Institute of Mathematics of Ukrainian Academy of Sciences}
\email{olaizyumtseva@yahoo.com}

\subjclass[2000]{60G15, 60J55, 60H40}
\keywords{Local time, Gaussian process, integrators, white noise, Brownian sheet, stochastic integral}

\begin{abstract}
In this article we discuss the existence of local time for a class of Gaussian processes which appears as the solutions to some stochastic evolution equations. We show that on small intervals such processes are Gaussian integrators generated by a continuously invertible operators. This allows us to conclude that the considered processes have a local time on any finite interval with respect to spatial variable.\end{abstract}

\maketitle

\section{Introduction}

In present paper we study the local time with respect to spatial variable of solution to the following white noise driven heat equation
\begin{equation}\label{eq1}
\begin{cases}
\frac{\partial x(u,t)}{\partial t}=\frac{1}{2}\frac{\partial^2}{\partial u^2}x(u,t)+W,\\
x(u,0)=0,\ t\geq 0,\ u\in{\mathbb R}.
\end{cases}
\end{equation}
Here $W$  is a 1-dimensional Gaussian space-time white noise in $L_2({\mathbb R}\times[0;+\infty)).$ Since $W$ is a random generalized function on ${\mathbb R}\times[0;+\infty),$ the equation \eqref{eq1} is understood in the sense of generalized functions. $W$ can be considered as a centered Gaussian random measure with independent values on the disjoint subsets of ${\mathbb R}\times[0; +\infty).$ Some times it is called by Brownian sheet \cite{13}. It is known \cite{1}  that solution to \eqref{eq1} is given by
\begin{equation}\label{eq2}
x(u,t)=\int^t_0\int_{{\mathbb R}} p_{t-s}(u-v)W(dv, ds),
\end{equation}
where the stochastic integral with respect to Brownian sheet $W$ is defined as an Ito integral \cite{2} by means of the isometry
\begin{equation}\label{eq3}
\begin{split}
E\Big(\int^t_0\int_{{\mathbb R}}p_{t-s}(u-v)W(dv, ds)\Big)^2=\\
=\int^t_0\int_{{\mathbb R}}p_{t-s}(u-v)^2dvds.
\end{split}
\end{equation}
Here $p_t(u)=\frac{1}{\sqrt{2\pi t}}e^{-\frac{u^2}{2t}},\ t>0,\ u\in{\mathbb R}.$ The interest in the equation \eqref{eq1} arises since it describes the random motion of the string. For example, M.Hairer in \cite{3} considered the following model. Take $N+1$ particles with positions $x_n$ immersed in a fluid and assume that nearest-neighbours are connected by harmonic springs. If the external force $F$ is acting on the particles, then the equations of motion in the overdamped regime have the following representation
\begin{equation*}
\begin{split}
\frac{dx_0}{dt}=k(x_1-x_0)+F(x_0),\\
\frac{dx_n}{dt}=k(x_{n+1}+x_{n-1}-2x_n)+F(x_n),\ n=\overline{1, N-1},\\
\frac{dx_N}{dt}=k(x_{N-1}-x_N)+F(x_N),
\end{split}
\end{equation*}
where $k$ is a constant factor characteristic of the spring: its stiffness. This constant appears from Hooke's Law. The described model can be considered as a primitive model for a polymer chain consisting $N+1$ monomers and without self-interaction. It does not take into account the effect of the molecules of water that would randomly ``kick'' the particles. Assume that these kicks occur  randomly and independently at high rate. This effect can be modelled by increments of independent Wiener processes $w_0, \ldots, w_N$ as follows
\begin{equation*}
\begin{split}
dx_0=k(x_1-x_0)dt+F(x_0)dt+\sigma dw_0(t),\\
dx_n=k(x_{n+1}+x_{n-1}-2x_n)dt+F(x_n)d t+\sigma dw_n(t),\ n=\overline{1, N-1},\\
dx_N=k(x_{N-1}-x_N)dt+F(x_N)dt+\sigma dw_N(t).
\end{split}
\end{equation*}
Formally taking the continuum limit (with the scalings $k\approx\nu N^2$ and $\sigma\approx\sqrt{N}$) as $N\to+\infty$ one can see that this system is well-described by the solution to a stochastic partial differential equation
\begin{equation}\label{eq4}
\frac{\partial x(u,t)}{\partial t}=\nu\frac{\partial^2}{\partial u^2}x(u,t)+F(x(u,t))+W.
\end{equation}

More general $a$ white noise driven heat equation was considered by T.Funaki in \cite{4}. For given two functions $a: {\mathbb R}^d\to{\mathbb R}^{d\times d}$ and $b: {\mathbb R}^d\to{\mathbb R}^d,$ let $x$ be a diffusion process on ${\mathbb R}^d$ determined by the stochastic differential equation
\begin{equation}\label{eq5}
dx(t)=a(x(t))dW(t)+b(x(t))dt,
\end{equation}
where $W$ is a $d$-dimensional Wiener process. Funaki introduced the following $C([0;1], {\mathbb R}^d)$-valued stochastic differential equation
\begin{equation}\label{eq6}
\frac{dX_t(s)}{dt}=a(X_t(s))W+b(X_t(s))+\frac{k}{2}\frac{\partial^2}{\partial s^2}X_t(s),
\end{equation}
$s\in[0;1].$ Here $W$ is a $d$-dimensional Gaussian white noise with two parameters in $L_2([0;+\infty]\times[0;1]).$ The equation \eqref{eq6} describes a string which moves in ${\mathbb R}^d$ being interfered by the process $x.$ The idea behind the derivation of the equation \eqref{eq6} is to take the scaling limit of a sequence of polygonal approximations. More precisely, Funaki approximates the string by a polygon and sets on each corner a particle which moves governed by a stochastic differential equation \eqref{eq5} with a suitable scaling. In each step of the approximation, the interaction between neighboring two particles was always taken into account. M.Kardar, G.Parisi and Y.-C.Zhang in \cite{5} proposed a model for the time evolution of the profile of a growing interface. The interface profile is described by a height $h(u,t).$ For convenience overhangs are ignored. The Langevin equation for a local growth of the profile is given by
\begin{equation}
\label{eq7}
\frac{\partial h}{\partial t}=\nu\nabla^2h+\frac{\lambda}{2}(\nabla h)^2+W.
\end{equation}

The mentioned equations \eqref{eq1},\ \eqref{eq4},\ \eqref{eq7} can be considered as the mathematical models which describe the motion of curve at the random medium. Since all these equations constructed by means of white noise, then the corresponding solutions would be nonsmooth random processes. As it was discussed in works of authors \cite{6,7} geometric characteristics of random processes are local times and self-intersection local times. At present paper we want to examine the existence of local time for the solution to the simplest of these equations. To do this we will show that the solution to the  equation \eqref{eq1} belongs to the class of Gaussian integrators. This class of Gaussian processes firstly was introduced by A.A.Dorogovtsev in the work \cite{8}. The original definition is the following.
\begin{defn}\label{defn1.1} \cite{8} A centered Gaussian process $x(t),\  t\in[0;1],\ x(0)=0$ is said to be an integrator if there exists the constant $c>0$ such that for an arbitrary partition $0=t_0<t_1<\ldots<t_n=1$ and real numbers $a_0, \ldots, a_{n-1}$ the following relation holds
$$
E\Big(\sum^{n-1}_{k=0}a_k(x(t_{k+1})-x(t_k))\Big)^2\leq c\sum^{n-1}_{k=0}a^2_k\Delta t_k.
$$
This inequality allows to define a stochastic integral for any square integrable function with respect to the integrator. The following statement describes the structure of all Gaussian integrators.
\end{defn}
\begin{lem}\label{lem1.1}\cite{8}
The centered Gaussian process $x(t),\ t\in[0;1]$ is the integrator iff there exist a white noise $\xi$ in the space $L_2([0;1])$and a continuous linear operator in the same space such that
$$
x(t)=(A{1\!\!\,{\rm I}}_{[0;t]}, \xi),\ t\in[0;1].
$$
\end{lem}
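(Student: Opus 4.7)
The equivalence splits into two parts, of which the necessity is the substantial one. For sufficiency, if $x(t) = (A\mathbf{1}_{[0;t]},\xi)$, then $\sum_{k} a_k(x(t_{k+1})-x(t_k)) = \bigl(A\sum_{k} a_k \mathbf{1}_{[t_k;t_{k+1}]},\xi\bigr)$, and taking the second moment gives $\bigl\|A\sum_{k} a_k \mathbf{1}_{[t_k;t_{k+1}]}\bigr\|^2_{L_2} \le \|A\|^2\sum_{k} a_k^2\Delta t_k$ by boundedness of $A$ and orthogonality of the indicator functions; thus $x$ is an integrator with constant $c=\|A\|^{2}$.

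For necessity, the plan is to build $A$ and $\xi$ from the covariance structure of $x$. First, the integrator inequality permits the extension of the assignment $\mathbf{1}_{[0;t]}\mapsto x(t)$ to a bounded linear operator $T\colon L_2([0;1])\to L_2(\Omega)$, realized as the stochastic integral $Tf=\int_0^1 f\,dx$, with $\|T\|^2\le c$. Second, the bilinear form $(f,g)\mapsto E[Tf\cdot Tg]$ is bounded on $L_2([0;1])\times L_2([0;1])$ and non-negative, so by the Riesz representation it equals $\langle Sf,g\rangle$ for a bounded self-adjoint operator $S$ with $0\le S\le cI$. Setting $A:=S^{1/2}$, one obtains the key identity $E[(Tf)^2]=\|Af\|^2_{L_2}$.

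The decisive step is then to realize $T$ in the factored form $Tf=(Af,\xi)$ for a genuine white noise $\xi$. The isometric relation shows that the rule $U\colon Af\mapsto Tf$ is well-defined and isometric on $\mathrm{Ran}(A)$, and extends by continuity to a linear isometry of $\overline{\mathrm{Ran}(A)}$ into $L_2(\Omega)$. Since $A$ is self-adjoint, $L_2([0;1])=\overline{\mathrm{Ran}(A)}\oplus \mathrm{Ker}(A)$; after enlarging the probability space by an independent Gaussian white noise indexed by $\mathrm{Ker}(A)$, $U$ extends to an isonormal Gaussian process on all of $L_2([0;1])$. Defining $(h,\xi):=U(h)$ and specializing to $h=A\mathbf{1}_{[0;t]}$ produces the stated representation $x(t)=(A\mathbf{1}_{[0;t]},\xi)$.

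The main obstacle lies in this last construction: manufacturing $\xi$ as a bona fide white noise on all of $L_2([0;1])$, not merely on $\overline{\mathrm{Ran}(A)}$, while preserving the pointwise identity $(Af,\xi)=Tf$. Since the subspace $\mathrm{Ker}(A)$ carries none of the randomness of $x$, the additional Gaussian randomness indexed by it must be introduced exogenously, typically by passing to an enlarged probability space. Once this extension is in place the identification is immediate, and the choice $A=S^{1/2}$ together with the representation theorem for bounded positive bilinear forms yields a continuous, in fact self-adjoint, operator $A$ as required.
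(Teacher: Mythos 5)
Your proof is correct, and the paper itself offers no proof of this lemma --- it is quoted from \cite{8} --- so there is nothing to diverge from; your factorization $E[Tf\,Tg]=\langle Sf,g\rangle$, $A=S^{1/2}$, followed by the isometry $Af\mapsto Tf$ on $\overline{\mathrm{Ran}(A)}$ and the adjunction of an independent white noise on $\mathrm{Ker}(A)$, is exactly the standard argument of \cite{8}. Indeed, the paper rehearses the same decomposition $L_2=H_1\oplus H_2$ with an exogenous independent Gaussian white noise on the complement when it constructs $\widetilde{\xi}$ inside the proof of Theorem \ref{thm2.2}, so your approach matches the intended one.
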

It follows from Lemma \ref{lem1.1} that all properties of the Gaussian integrator can be characterized in terms of the operator $A.$

Since we are going to study the local time for  the solution to the equation \eqref{eq1}, let us recall the general definition of local time for a 1-dimensional random process $y(t),\ t\in[T_1, T_2].$
\begin{defn}\label{defn1.2} For any $p\in{\mathbb N},\ z\in{\mathbb R}$
$$
L_p\mbox{-}\lim_{\varepsilon\to0}\int^{T_2}_{T_1}p_\varepsilon(y(s)-z)ds=\int^{T_2}_{T_1}\delta_{z}(y(s))ds=:l^{y}(z)
$$
is said to be a local time of the process $y$ at the point $z$ whenever the limit exists.
\end{defn}
Put $l^{y}:=l^{y}(0).$

This paper is organized as follows. In the beginning we prove that solution to a stochastic heat equation at the fixed moment of time is the integrator. Then we check that for a sufficiently small interval this Gaussian integrator is generated by a continuously invertible linear operator. Using that, we establish the existence of local time with respect to spatial variable on any finite interval.
\section{The existence of local time}
In this section we will prove that the solution $x$ to the equation \eqref{eq1} considered at the fixed moment of the time has a local time with respect to spatial variable. Our proof of existence of the local time for $x$ has the following outline. First of all we prove that $x$ belongs to the class of Gaussian integrators. The local times of integrators were discussed in the works \cite{6,7,9,14}. It was proved that if the integrator is generated by a continuously invertible operator, then it has a local time. We will check that on the small intervals the process $x$ satisfies this condition. Then we discuss the existence of the local time for a linear combination of Gaussian process and some jointly Gaussian random variable and finally prove the main statement for an arbitrary finite interval.

As it is mentioned in Introduction the solution to \eqref{eq1} for $t=1$ has a form
$$
x(u)=\int^1_0\int_{{\mathbb R}}p_{1-s}(u-v)W(dv, ds),\ u\in[U_1; U],
$$
where $W$ is the Brownian sheet in the space $L_2({\mathbb R}\times[0;1]).$
Denote by
$$
x_{U_1}(u)=x(u)-x(U_1), \ u\in[U_1; U_2].
$$
\begin{thm}\label{thm2.1}
The process $x_{U_1}(u),\ u\in[U_1, U_2]$ is a Gaussian integrator.
\end{thm}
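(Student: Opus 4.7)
The plan is to verify the integrator inequality directly from the spectral representation. Given a partition $U_1 = u_0 < u_1 < \dots < u_n = U_2$ and coefficients $a_0,\dots,a_{n-1}$, the Ito isometry \eqref{eq3} turns the quantity
$$
E\Big(\sum_{k=0}^{n-1} a_k\bigl(x_{U_1}(u_{k+1})-x_{U_1}(u_k)\bigr)\Big)^2
$$
into an $L_2({\mathbb R}\times[0;1])$-norm of the deterministic integrand, namely
$\int_0^1\!\!\int_{\mathbb R}\bigl(\sum_k a_k[p_{1-s}(u_{k+1}-v)-p_{1-s}(u_k-v)]\bigr)^2 dv\,ds$. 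This reduces the problem to a purely analytic estimate for the heat kernel.

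Next I would pass to the Fourier side in the spatial variable $v$. Since $\widehat{p_{1-s}(u-\cdot)}(\lambda)=e^{-i\lambda u}e^{-(1-s)\lambda^2/2}$, Plancherel's identity converts the inner integral into
$\frac{1}{2\pi}\int_{\mathbb R} e^{-(1-s)\lambda^2}\bigl|\sum_k a_k(e^{-i\lambda u_{k+1}}-e^{-i\lambda u_k})\bigr|^2 d\lambda$.
Performing the $s$-integration first yields the scalar factor $(1-e^{-\lambda^2})/\lambda^2$ in front of the squared trigonometric sum.

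The key observation is that the trigonometric sum is itself a Fourier transform of a step function: writing $g(u)=\sum_k a_k\,\mathbf{1}_{[u_k,u_{k+1})}(u)$, integration by parts in Fourier gives
$\sum_k a_k(e^{-i\lambda u_{k+1}}-e^{-i\lambda u_k})=-i\lambda\,\widehat g(\lambda)$,
so the $\lambda^2$ in the denominator cancels and one is left with
$\frac{1}{2\pi}\int_{\mathbb R}(1-e^{-\lambda^2})|\widehat g(\lambda)|^2\,d\lambda$.
Using $1-e^{-\lambda^2}\le 1$ and Plancherel once more bounds this by $\|g\|_{L_2}^2=\sum_k a_k^2\Delta u_k$, which is exactly the integrator inequality of Definition \ref{defn1.1} with constant $c=1$.

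I do not anticipate a real obstacle in this argument; the only point that needs a moment of care is the Fubini exchange of $s$- and $\lambda$-integrals and the use of Plancherel for an $L_2$-step function, both of which are routine because the integrand is non-negative and the test function $g$ is compactly supported. The scheme is essentially an application of Plancherel twice, with the telescoping of the increments accounted for by the factor $-i\lambda$ produced by differentiating in the spatial variable.
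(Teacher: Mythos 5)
Your proposal is correct and follows essentially the same route as the paper: Ito isometry, Plancherel in the spatial variable, the identification of the telescoping exponential sum with $-i\lambda$ times the Fourier transform of the step function $\sum_k a_k{1\!\!\,{\rm I}}_{[u_k;u_{k+1}]}$, integration over $s$ producing the factor $1-e^{-\lambda^2}$, and the bound $1-e^{-\lambda^2}\le 1$ followed by Plancherel again, giving the integrator inequality with $c=1$. The only cosmetic difference is the order in which you perform the $s$-integration and the step-function identification.
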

\begin{proof}
Denote by
$$
\widehat{f}(\lambda)=\frac{1}{\sqrt{2\pi}}\int_{{\mathbb R}}f(y)e^{-i\lambda y}dy.
$$
Using the properties of stochastic integral and isometry property of Fourier transform one can get the following relations
$$
E\Big(\sum^{n-1}_{k=0}a_k(x_{U_1}(u_{k+1})-x_{U_1}(u_{k}))\Big)^2=
$$
$$
=
E\Big(\sum^{n-1}_{k=0}a_k(x(u_{k+1})-x(u_{k}))\Big)^2=
$$
$$
=
E\Big(\sum^{n-1}_{k=0}a_k\Big(
\int^1_0\int_{{\mathbb R}}(p_{1-s}(u_{k+1}-v)-p_{1-s}(u_k-v))W(dv,ds)
\Big)\Big)^2=
$$
$$
=\int^1_0\int_{{\mathbb R}}
\Big(\sum^{n-1}_{k=0}a_k(p_{1-s}(u_{k+1}-v)-p_{1-s}(u_k-v))
\Big)\Big)^2dvds=
$$
$$
=\int^1_0\int_{{\mathbb R}}
\Big|\sum^{n-1}_{k=0}a_k(\widehat{p}_{1-s}(u_{k+1},\lambda)-\widehat{p}_{1-s}(u_k,\lambda))
\Big|^2d\lambda ds=
$$
$$
=
\frac{1}{2\pi}
\int^1_0\int_{{\mathbb R}}
\Big|\sum^{n-1}_{k=0}a_k
e^{-\frac{(1-s)\lambda^2}{2}}
[e^{-i\lambda u_{k+1}}-e^{-i\lambda u_{k}}]
\Big|^2d\lambda ds=
$$
\begin{equation}
\label{eq9}
=
\int^1_0\int_{{\mathbb R}}\lambda^2e^{-(1-s)\lambda^2}|\widehat{f}(\lambda)|^2d\lambda ds,
\end{equation}
where
$$
f(y)=\sum^{n-1}_{k=0}a_k{1\!\!\,{\rm I}}_{[u_k; u_{k+1}]}(y).
$$
By integrating \eqref{eq9} over $s$ one can obtain that \eqref{eq9} equals
$$
\int_{{\mathbb R}}|\widehat{f}(\lambda)|^2(1-e^{-\lambda^2})d\lambda\leq
\int_{{\mathbb R}}|\widehat{f}(\lambda)|^2d\lambda=
$$
$$
=
\int_{{\mathbb R}}f(u)^2du=\sum^{n-1}_{k=0}a^2_k\Delta u_k,
$$
which proves the theorem.
\end{proof}
 Since $x_{U_1}$ is an integrator, then there exist a continuous linear operator $A$ in the space $L_2([U_1; U_2])$ and a white noise $\xi$ in the same space such that the process $x_{U_1}(u),\ u\in[U_1; U_2]$ admits the representation $x_{U_1}(u)=\Big(A{1\!\!\,{\rm I}}_{[U_1; u]}, \xi\Big).$

Moreover, for any $f\in L_2([U_1; U_2]) $
$$
\|Af\|^2=\int_{{\mathbb R}}|\widehat{f}(\lambda)|^2(1-e^{-\lambda^2})d\lambda.
$$
Let us check that in the case of sufficiently small interval $[U_1;U_2]$ the operator $A$ is continuously invertible.
\begin{lem}\label{lem2.1}
For an arbitrary interval $[U_1; U_2]$ with $U_2-U_1<2\sqrt{\pi}$ the operator $A$ corresponding to the integrator $x_{U_1}$ is invertible.
\end{lem}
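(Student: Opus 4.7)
The plan is to work directly from the identity
$$
\|Af\|^{2}=\int_{\mathbb{R}}|\widehat{f}(\lambda)|^{2}(1-e^{-\lambda^{2}})\,d\lambda,
$$
which is valid for $f\in L_{2}([U_{1};U_{2}])$ (extended by zero off that interval). By Plancherel this equals $\|f\|^{2}-\int_{\mathbb{R}}|\widehat{f}(\lambda)|^{2}e^{-\lambda^{2}}d\lambda$, so the continuous invertibility of $A$ is equivalent to the existence of $\alpha<1$ for which
$$
\int_{\mathbb{R}}|\widehat{f}(\lambda)|^{2}e^{-\lambda^{2}}d\lambda\le \alpha\,\|f\|^{2}
$$
whenever $\operatorname{supp}f\subset[U_{1};U_{2}]$. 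This is the inequality I will prove, with $\alpha=(U_{2}-U_{1})/(2\sqrt{\pi})$, which is strictly less than $1$ precisely under the stated assumption.

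First I would recognize the weight $e^{-\lambda^{2}}$ as (up to a constant) the Fourier transform of $p_{2}$: with the convention used in the paper, $\widehat{p_{2}}(\lambda)=\frac{1}{\sqrt{2\pi}}e^{-\lambda^{2}}$. Using the convolution/Parseval identities this rewrites the integral as a quadratic form
$$
\int_{\mathbb{R}}|\widehat{f}(\lambda)|^{2}e^{-\lambda^{2}}d\lambda
=\int_{U_{1}}^{U_{2}}\int_{U_{1}}^{U_{2}}p_{2}(x-y)\,f(y)\overline{f(x)}\,dy\,dx,
$$
i.e.\ $\langle Kf,f\rangle$ where $K$ is the integral operator on $L_{2}([U_{1};U_{2}])$ with kernel $p_{2}(x-y)=\frac{1}{2\sqrt{\pi}}e^{-(x-y)^{2}/4}$.

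Next I would estimate $\|K\|$ via a Hilbert--Schmidt bound using the uniform estimate $p_{2}(x-y)\le\frac{1}{2\sqrt{\pi}}$: a straightforward double application of the Cauchy--Schwarz inequality (first in $y$, using $\int_{U_{1}}^{U_{2}}p_{2}(x-y)^{2}dy\le\frac{U_{2}-U_{1}}{4\pi}$, and then in $x$) produces
$$
\langle Kf,f\rangle\le\frac{U_{2}-U_{1}}{2\sqrt{\pi}}\,\|f\|^{2}.
$$
Inserting this into the identity for $\|Af\|^{2}$ yields
$$
\|Af\|^{2}\ge\left(1-\frac{U_{2}-U_{1}}{2\sqrt{\pi}}\right)\|f\|^{2},
$$
which, combined with the continuity $\|Af\|\le\|f\|$ already observed in the proof of Theorem~\ref{thm2.1}, gives a two-sided equivalence of norms and hence continuous invertibility of $A$ under the hypothesis $U_{2}-U_{1}<2\sqrt{\pi}$.

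No genuine obstacle is expected; the only thing that must be handled with care is the passage from the spectral side (integral against $e^{-\lambda^{2}}$) to the spatial side (integral against the Gaussian kernel $p_{2}$), and in particular tracking the $2\pi$ factors dictated by the chosen Fourier convention so that the kernel really comes out to $p_{2}(x-y)$. Once that is done, the sharp threshold $2\sqrt{\pi}$ appears automatically from the supremum $p_{2}(0)=1/(2\sqrt{\pi})$ multiplied by the interval length $U_{2}-U_{1}$.
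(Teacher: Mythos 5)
Your proof is correct and follows essentially the same route as the paper: both start from $\|Af\|^{2}=\|f\|^{2}-\int_{\mathbb{R}}|\widehat f(\lambda)|^{2}e^{-\lambda^{2}}d\lambda$, move the Gaussian weight to the spatial side via the heat kernel, and bound the resulting smoothing term by $\frac{U_{2}-U_{1}}{2\sqrt{\pi}}\|f\|^{2}$ using Cauchy--Schwarz and the finite length of the interval. The only (cosmetic) difference is that the paper estimates $\|f*p_{1}\|^{2}_{L_{2}(\mathbb{R})}$ directly, whereas you estimate the equivalent quadratic form $\langle Kf,f\rangle$ with kernel $p_{2}(x-y)$ on the square; both yield the identical constant.
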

\begin{proof}
Suppose that $b-a<2\sqrt{\pi}.$ Let us check that there exists a constant $c_1>0$ such that for any $f\in L_2([a;b])$
$$
\|Af\|^2\geq c_1\|f\|^2.
$$
Really, for $f$ with ${\mathop{\rm supp}} f\in [a; b]$
$$
\|Af\|^2=\int_{{\mathbb R}}|\widehat{f}(\lambda)|^2(1-e^{-\lambda^2})d\lambda=
$$
$$
=\|f\|^2-\int_{{\mathbb R}}|\widehat{f}(\lambda)|^2e^{-\lambda^2}d\lambda.
$$
Note that
$$
\int_{{\mathbb R}}|\widehat{f}(\lambda)|^2e^{-\lambda^2}d\lambda=
\int_{{\mathbb R}}|\widehat{f}(\lambda)e^{-\frac{\lambda^2}{2}}|^2d\lambda=
$$
$$
=
\int_{{\mathbb R}}|\widehat{f*p_1}(\lambda)|^2d\lambda=\int_{{\mathbb R}}f*p_1(u)^2du=
$$
$$
=
\int_{{\mathbb R}}\Big(\int^b_af(v)\frac{1}{\sqrt{2\pi}}e^{-\frac{(u-v)^2}{2}}dv\Big)^2du.
$$
Applying the Cauchy inequality one can obtain the following relation
$$
\Big(\int^b_af(v)\frac{1}{\sqrt{2\pi}}e^{-\frac{(u-v)^2}{2}}dv\Big)^2\leq
$$
\begin{equation}\label{eq10}
\leq \|f\|^2\int^b_a\frac{1}{2\pi}e^{-(u-v)^2}dv.
\end{equation}
It follows from \eqref{eq10} that
$$
\|Af\|^2\geq\|f\|^2-\|f\|^2\int_{{\mathbb R}}\int^b_a\frac{1}{2\pi}e^{-(u-v)^2}dvdu=\Big(1-\frac{b-a}{2\sqrt{\pi}}\Big)\|f\|^2.
$$
\end{proof}
In the  \cite{9} we prove that the Gaussian integrator generated by a continuously invertible operator has a local time at any real point which is jointly continuous in time and space variables. Consequently, the Gaussian integrator $x_a(u),\ u\in[a;b]$ with $b-a<2\sqrt{\pi}$ has a jointly continuous local time at any real point. Now we will consider the interval of an arbitrary length. Consider the Gaussian integrator $x_{U_1}(u),\ u\in [U_1; U_2]$ and the partition $U_1=u_0<u_1<\ldots<u_n=U_2$ such that for any $k=\overline{0, n-1}$
$$
u_{k+1}-u_k<2\sqrt{\pi}.
$$
Every Gaussian integrator $X_{u_k}(u),\ u\in[u_k; u_{k+1}]$ has a jointly continuous local time at any real point. Does the same hold for the Gaussian integrator $X_{U_1}(u),\ u\in[U_1; U_2]?$ The answer is given by the next theorem.

\begin{thm}
\label{thm2.2}
For any $p\in{\mathbb N},\ z\in{\mathbb R}$ there exists
$$
L_p\mbox{-}\lim_{\varepsilon\to0}\int^{U_2}_{U_1}p_\varepsilon(x_{U_1}(u)-z)du=\int^{U_2}_{U_1}\delta_z(x_{U_1}(u))du.
$$
\end{thm}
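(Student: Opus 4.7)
The plan is to reduce the full-interval statement to the small-interval case handled by Lemma~\ref{lem2.1}. Choose a partition $U_1=u_0<u_1<\cdots<u_n=U_2$ with $u_{k+1}-u_k<2\sqrt{\pi}$ for every $k$. By Lemma~\ref{lem2.1} and the result of \cite{9}, each Gaussian integrator $x_{u_k}(u),\ u\in[u_k;u_{k+1}]$ has a local time $l^{x_{u_k}}(w)$ (over $[u_k;u_{k+1}]$) that is jointly continuous in the time and spatial variables; in particular,
\[
\int_{u_k}^{u_{k+1}}p_\varepsilon(x_{u_k}(u)-w)\,du\xrightarrow[\varepsilon\to0]{L_p}l^{x_{u_k}}(w)
\]
for every fixed $w\in\mathbb R$. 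Writing $x_{U_1}(u)=x_{u_k}(u)+\eta_k$ on $[u_k;u_{k+1}]$ with $\eta_k:=x_{U_1}(u_k)$, the integral in question splits as $\sum_{k=0}^{n-1}J_k^\varepsilon(z)$, where
\[
J_k^\varepsilon(z)=\int_{u_k}^{u_{k+1}}p_\varepsilon(x_{u_k}(u)-(z-\eta_k))\,du.
\]
Since a finite sum of $L_p$-convergent sequences is $L_p$-convergent, it suffices to prove that $J_k^\varepsilon(z)\to l^{x_{u_k}}(z-\eta_k)$ in $L_p$ for each $k$.

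The subtle point is that $\eta_k$ is jointly Gaussian with $x_{u_k}$ rather than independent of it, so pointwise (in the spatial variable) $L_p$-convergence does not mechanically transfer to convergence at the random point $z-\eta_k$. The strategy is to upgrade the pointwise result to uniform convergence on compact sets,
\[
E\sup_{|w|\le R}\Big|\int_{u_k}^{u_{k+1}}p_\varepsilon(x_{u_k}(u)-w)\,du-l^{x_{u_k}}(w)\Big|^{p}\xrightarrow[\varepsilon\to0]{}0
\]
for every $R>0$. This will follow from the pointwise convergence combined with a H\"older-type spatial estimate
\[
E\Big|\int_{u_k}^{u_{k+1}}\bigl(p_\varepsilon(x_{u_k}(u)-w)-p_\varepsilon(x_{u_k}(u)-w')\bigr)du\Big|^{2q}\le C|w-w'|^{\alpha},
\]
uniform in $\varepsilon>0$, for sufficiently large $q$ and some $\alpha>0$. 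To derive this bound I would represent $p_\varepsilon$ via its Fourier transform $p_\varepsilon(x)=(2\pi)^{-1}\int e^{-\varepsilon\lambda^2/2}e^{i\lambda x}d\lambda$, expand the $2q$-th power, and compute the moments of the resulting exponentials using the Gaussian characteristic function; integrability of the resulting multiple $\lambda$-integrals is controlled by the lower bound $\|Af\|^2\ge\bigl(1-(b-a)/(2\sqrt{\pi})\bigr)\|f\|^2$ from Lemma~\ref{lem2.1}, which supplies precisely the non-degeneracy of the covariance of $x_{u_k}$ needed to sum the Wick diagrams.

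Once the uniform convergence on $\{|w|\le R\}$ is established, the random shift by $\eta_k$ is absorbed by a truncation argument: on $\{|z-\eta_k|\le R\}$ the $L_p$-error is dominated by the supremum above, while on the complementary event the Gaussian tails of $\eta_k$, together with uniform-in-$\varepsilon$ $L_{2p}$-bounds on $J_k^\varepsilon$ and $l^{x_{u_k}}$ obtained in passing, give a contribution that vanishes as $R\to\infty$. Sending first $\varepsilon\to0$ and then $R\to\infty$ yields the $L_p$-limit for each summand, and summation over $k$ proves the theorem. The main obstacle is precisely the uniform-in-$\varepsilon$ H\"older estimate in the spatial variable; this is the step that genuinely uses the continuous invertibility of the generating operator $A$ provided by Lemma~\ref{lem2.1}, everything else being a routine truncation and summation.
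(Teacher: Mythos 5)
Your decomposition of $[U_1;U_2]$ into subintervals of length less than $2\sqrt{\pi}$ and the rewriting of each summand as $\int_{u_k}^{u_{k+1}}p_\varepsilon(x_{u_k}(u)-(z-\eta_k))\,du$ with a Gaussian shift $\eta_k$ that is \emph{correlated} with $x_{u_k}$ is exactly the paper's starting point, and you correctly identify that this correlation is the whole difficulty. From there you take a genuinely different route. You propose a uniform-in-$\varepsilon$ spatial H\"older moment estimate for $F_\varepsilon(w)=\int_{u_k}^{u_{k+1}}p_\varepsilon(x_{u_k}(u)-w)\,du$, upgrade the pointwise $L_p$-convergence from \cite{9} to convergence of $\sup_{|w|\le R}|F_\varepsilon(w)-l^{x_{u_k}}(w)|$ by a Kolmogorov--Garsia-type chaining argument, and then absorb the random argument $z-\eta_k$ by truncation and Gaussian tails. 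The paper instead attacks the shifted functional head-on: it decomposes $x(U_1)-x(u_k)+z=(h,\widetilde{\xi})+\zeta$ with $\zeta$ independent of the driving white noise $\widetilde{\xi}$, conditions on the $h$-component, computes the mixed moments $E(V_{\varepsilon_1})^{l}(V_{\varepsilon_2})^{2p-l}$ as explicit Gaussian integrals, and reduces everything to the finiteness of $\int_{\Delta_{2p}}G((I-P_h)\widetilde{A}{1\!\!\,{\rm I}}_{[u_k;v_1]},\ldots)^{-1/2}d\vec{v}$, handled via the Gram-determinant lemmas of \cite{10,11} (with a case distinction on whether $\widetilde{A}^{-1}h$ is a step function) and ultimately via moments of the Brownian-bridge local time. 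Both routes draw their non-degeneracy from the same place, the invertibility of the generating operator in Lemma~\ref{lem2.1}. Your route buys a cleaner conditioning structure --- all moment computations involve only deterministic spatial points, so the auxiliary lemmas about appending the vector $h$ to a Gram determinant are not needed --- and it is more modular, reusing the joint-continuity machinery of \cite{9}. What it costs is that the hardest analytic step is deferred: you need the H\"older exponent in your $2q$-th moment bound to exceed $1$ for the chaining to close, and for the tail estimate you also need a uniform-in-$\varepsilon$ bound on $E\big(\sup_{w\in{\mathbb R}}F_\varepsilon(w)\big)^{2p}$ (which does follow from the H\"older bound together with $\int_{{\mathbb R}}F_\varepsilon(w)\,dw=u_{k+1}-u_k$, but should be stated); these estimates carry essentially the same analytic weight as the paper's simplex-integral computation. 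With those supplied, your argument is a valid alternative proof.
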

\begin{proof}
Consider the partition $U_1=u_0<u_1<\ldots<u_n=U_2$ such that for any $k=\overline{0, n-1}$
$$
u_{k+1}-u_k<2\sqrt{\pi}.
$$
Then one can written the following relation
$$
\int^{U_2}_{U_1}p_\varepsilon(x_{U_1}(u)-z)du=\sum^{n-1}_{k=0}\int^{u_{k+1}}_{u_k}
p_\varepsilon(x_{u_k}(u)-(x(U_1)-x(u_k)+z))du.
$$
Denote by
$$
V_\varepsilon=\int^{u_{k+1}}_{u_k}
p_\varepsilon(x_{u_k}(u)-(x(U_1)-x(u_k)+z))du.
$$
Then to prove the theorem it suffices to check that
\begin{equation}
\label{eq11}
E(V_{\varepsilon_1}-V_{\varepsilon_2})^{2p}\to0, \ \varepsilon_1, \varepsilon_2\to0.
\end{equation}
Since
$$
E(V_{\varepsilon_1}-V_{\varepsilon_2})^{2p}=\sum^{2p}_{l=0}(-1)^{2p-l}C^l_{2p}
E(V_{\varepsilon_1})^l(V_{\varepsilon_2})^{2p-l},
$$
then \eqref{eq11} follows from the existence of finite limit $E(V_{\varepsilon_1})^l(V_{\varepsilon_2})^{2p-l}$ as $\varepsilon_1, \varepsilon_2\to0.$ Let us check it.

One can see that
$$
E(V_{\varepsilon_1})^l(V_{\varepsilon_2})^{2p-l}=
$$
$$
=E\int^{u_{k+1}}_{u_k}\ldots^{2p}\int^{u_{k+1}}_{u_k}\prod^l_{i=1}p_{\varepsilon_1}(x_{u_k}(v_i)-(x(U_1)-x(u_k)+z))\cdot
$$
\begin{equation}\label{eq12}
\cdot
\prod^{2p}_{j=l+1}p_{\varepsilon_2}(x_{u_k}(v_j)-(x(U_1)-x(u_k)+z))d\vec{v}.
\end{equation}
Since $x_{u_k}(u),\ u\in[u_k; u_{k+1}]$ is the Gaussian integrator, then there exist a continuous linear operator $\widetilde{A}$  in the space $L_2([u_k; u_{k+1}])$ and a white noise $\widetilde{\xi}$ in $L_2([u_k; u_{k+1}])$ such that
$$
x_{u_k}(u)=(\widetilde{A}{1\!\!\,{\rm I}}_{[u_k; u]}, \widetilde{\xi}).
$$
Note that the random variable $x(U_1)-x(u_k)+z$ is jointly Gaussian with the white noise $\widetilde{\xi}.$
To check this let us recall the construction of the white noise $\widetilde{\xi}.$ Denote by $\overline{LS\{x_{u_k}\}}$ the closure of the linear span of values of $x_{u_k}$ with respect to square mean norm. $\overline{LS\{x_{u_k}\}}$ is a separable Hilbert space. There exists the subspace $H_1$ of $L_2([u_k; u_{k+1}])$ which is isomorphic to $\overline{LS\{x_{u_k}\}}.$ Denote by $H_1=j(\overline{LS\{x_{u_k}\}}),$ where $j: \overline{LS\{x_{u_k}\}}\to H_1$ is isomorphism. Then $L_2([u_k; u_{k+1}])$ can be represented as a direct sum $H_1\oplus H_2.$ Suppose that $\widetilde{\xi}_2$ is a Gaussian white noise in $H_2$ which is independent of $x.$ Put $(h_1, \widetilde{\xi}_1)=j^{-1}(h_1)$ and
$$
(h_1\widetilde{\xi})=(h_1,\widetilde{\xi}_1)+(h_2,\widetilde{\xi}_2)=j^{-1}(h_1)+(h_2,\widetilde{\xi}_2).
$$
Notice that $j^{-1}(h_1)\in\overline{LS\{x_{u_k}\}}.$ Since $x(u),\ u\in[U_1; U_2]$ is the Gaussian process, then the random variables $j^{-1}(h_1)$ and $x(U_1)-x(u_k)+z$ are jointly Gaussian. The random variables $j^{-1}(h_1)$ and $(h_2, \widetilde{\xi}_2)$ are independent. It implies that the random variables $(h, \widetilde{\xi})$ and $x(U_1)-x(u_k)+z$ are jointly Gaussian. Hence, the random variable $x(U_1)-x(u_k)+z$ and $\widetilde{\xi}$ are jointly Gaussian. To continue the proof of the theorem we need the following statement which can be easily proved.
\begin{lem}\label{lem2.2}
 Let $\xi$ be a white noise in the space $L_2([0;1]).$ For a jointly Gaussian with $\xi$ random variable $\eta$ there exist $h\in L_2([0;1])$ and the random variable $\zeta,$ that is independent of the white noise $\xi$, such that $\eta$ admits the representation
 $$
 \eta=(h,\xi)+\zeta.
 $$
  \end{lem}
 Applying Lemma 2.2 to the white noise $\widetilde{\xi}$ and the random variable $x(U_1)-x(u_k)+z$ one can see that \eqref{eq12} can be represented as follows
 $$
 (2p)!
 \int_{\Delta_{2p}(u_k; u_{k+1})}
 E\prod^l_{i=1}
 p_{\varepsilon_1}((\widetilde{A}{1\!\!\,{\rm I}}_{[u_k;v_i]}, \widetilde{\xi})-(h,\widetilde{\xi})-\zeta)\cdot
 $$
 \begin{equation}
 \label{eq13}
 \cdot\prod^{2p}_{j=l+1}
 p_{\varepsilon_2}((\widetilde{A}{1\!\!\,{\rm I}}_{[u_k;v_j]}, \widetilde{\xi})-(h,\widetilde{\xi})-\zeta)d\vec{v},
 \end{equation}
 where
 $$
 \Delta_m(u_k; u_{k+1})=\{u_k\leq v_1<\ldots<v_m\leq u_{k+1}\}, \ \Delta_m:=\Delta_m(0;1).
 $$
 Denote by $P_h$ the projection onto linear span generated by $h.$ Then \eqref{eq13} possesses the representation
 $$
 (2p)!
 \int_{\Delta_{2p}(u_k; u_{k+1})}
 EE
\Big\{\Big(
\prod^l_{i=1}
 p_{\varepsilon_1}((
(I-P_h+P_h)
\widetilde{A}{1\!\!\,{\rm I}}_{[u_k;v_i]}, \widetilde{\xi})
-(h,\widetilde{\xi})-\zeta\Big)\cdot
 $$
 \begin{equation}
 \label{eq14}
\cdot \prod^{2p}_{j=l+1}
 p_{\varepsilon_2}((
(I-P_h+P_h)
\widetilde{A}{1\!\!\,{\rm I}}_{[u_k;v_j]}, \widetilde{\xi})
-(h,\widetilde{\xi})
-\zeta)/
(P_h\widetilde{A}{1\!\!\,{\rm I}}_{[u_k; v_i]}, \widetilde{\xi}),\  i=\overline{1, 2p},\ (h,\widetilde{\xi}),\ \zeta\Big\}
d\vec{v}.
 \end{equation}
 Denote by $I(\varepsilon_1, \varepsilon_2)={\mathop{\rm diag}}(\underbrace{\varepsilon_1, \ldots,\varepsilon_1}_l, \underbrace{\varepsilon_2, \ldots,\varepsilon_2}_{2p-l}).$ Let $B(g_1, \ldots, g_n)$ be the Gramian matrix constructed by elements $g_1, \ldots, g_n,$
 $$
 G(g_1, \ldots, g_n):=\det B(g_1, \ldots,g_n).
 $$
 Put
 $$
 C_{\varepsilon_1,\varepsilon_2}(\vec{v})=
 $$
 $$
 =B((I-P_h)\widetilde{A}{1\!\!\,{\rm I}}_{[u_k; v_1]},\ldots,(I-P_h)\widetilde{A}{1\!\!\,{\rm I}}_{[u_k; v_{2p}]})+I(\varepsilon_1, \varepsilon_2),
 $$
 $$
 \vec{\theta}=\begin{pmatrix}
P_h\widetilde{A}{1\!\!\,{\rm I}}_{[u_k; v_1]}-(h, \widetilde{\xi})-\zeta\\
\vdots\\
P_h\widetilde{A}{1\!\!\,{\rm I}}_{[u_k; v_{2p}]}-(h, \widetilde{\xi})-\zeta
\end{pmatrix}
$$
Then \eqref{eq14} equals
\begin{equation}
\label{eq15}
(2p)!
\int_{\Delta_{2p}(u_k;u_{k+1})}
\frac{1}
{(2\pi)\sqrt{\det C_{\varepsilon_1, \varepsilon_2}(\vec{v})}}
e^{-\frac{1}{2}(C^{-1}_{\varepsilon_1, \varepsilon_2}(\vec{v})\vec{\theta}, \vec{\theta})}d\vec{v}.
\end{equation}
It follows from \eqref{eq15} that to end the proof of the theorem one must check that the integral
\begin{equation}
\label{eq16}
\int_{\Delta_{2p}(u_k;u_{k+1})}\frac{d\vec{v}}
{\sqrt{G((I-P_h)\widetilde{A}{1\!\!\,{\rm I}}_{[u_k; v_1]}, \ldots, (I-P_h)\widetilde{A}{1\!\!\,{\rm I}}_{[u_k; v_{2p}]})}}
\end{equation}
converges. Really, for any $\vec{v}\in \Delta_{2p}(u_k; u_{k+1})$
$$
\frac{1}{\sqrt{\det C_{\varepsilon_1,\varepsilon_2}(\vec{u})}}
e^{-\frac{1}{2}(C^{-1}_{\varepsilon_1,\varepsilon_2}(\vec{v})\vec{\theta}, \vec{\theta})}\to
$$
$$
\frac{1}
{\sqrt{G((I-P_h))\widetilde{A}{1\!\!\,{\rm I}}_{[u_k; v_1]}, \ldots, (I-P_h))\widetilde{A}{1\!\!\,{\rm I}}_{[u_k; v_{2p}]}}}\cdot
$$
$$
\cdot
e^{-\frac{1}{2}(B^{-1}((I-P_h))\widetilde{A}{1\!\!\,{\rm I}}_{[u_k; v_1]}, \ldots, (I-P_h))\widetilde{A}{1\!\!\,{\rm I}}_{[u_k; v_{2p}]})\vec{\theta},\vec{\theta})}
$$
as $\varepsilon_1, \varepsilon_2\to0.$ Since for any $\varepsilon_1, \varepsilon_2>0 $ less or equal to
$$
\frac{1}
{\sqrt{G((I-P_h))\widetilde{A}{1\!\!\,{\rm I}}_{[u_k; v_1]}, \ldots, (I-P_h))\widetilde{A}{1\!\!\,{\rm I}}_{[u_k; v_{2p}]}}},
$$
then using \eqref{eq16} and applying the Lebesgue dominated convergence theorem one can obtain the statement of the theorem. Therefore, let us check \eqref{eq16}. To do this we need the following statements which are related to the properties of the Gram determinant in the integrand. Those statements were proved in the works \cite{10, 11}. Let $L$ be the finite-dimensional subspace of the space $L_2([0;1]).$ Suppose that $e_1, \ldots, e_n$ is an orthonormal basis for $L.$ Denote by $P_L$ the projection onto $L.$ One can prove the following statement.

\begin{lem}\label{lem2.2}\cite{10}
For any $g_1, \ldots, g_k\in L_2([0;1])$ the following relation holds
$$
G((I-P_L)g_1, \ldots, (I-P_L)g_k)=G(g_1, \ldots, g_k, e_1, \ldots, e_n).
$$
\end{lem}
It follows from Lemma \ref{lem2.2} that
\begin{equation}
\label{eq17}
G((I-P_h)\widetilde{A}{1\!\!\,{\rm I}}_{[u_k;v_1]}, \ldots,(I-P_h)\widetilde{A}{1\!\!\,{\rm I}}_{[u_k;v_p]} )=
G\Big(\widetilde{A}{1\!\!\,{\rm I}}_{[u_k;v_1]}, \ldots,\widetilde{A}{1\!\!\,{\rm I}}_{[u_k;v_p]}, \frac{h}{\|h\|} \Big).
\end{equation}
Since $\widetilde{A}$ is a continuously invertible operator in the space $L_2([u_k; u_{k+1}]),$ then (16) can be represented as follows
\begin{equation}
\label{eq18}
G\Big(\widetilde{A}{1\!\!\,{\rm I}}_{[u_k;v_1]}, \ldots,\widetilde{A}{1\!\!\,{\rm I}}_{[u_k;v_p]}, \widetilde{A}\widetilde{A}^{-1}\frac{h}{\|h\|} \Big).
\end{equation}
One can check that the following theorem holds.
\begin{thm}
\label{thm2.3}\cite{11}
Suppose that $A$ is a continuously invertible operator in the Hilbert space $H.$ Then for any elements $e_1, \ldots, e_n$ of the space $H$ the following relation holds
$$
G(Ae_1, \ldots, Ae_n)\geq\frac{1}{\|A^{-1}\|^{2n}}G(e_1, \ldots, e_n).
$$
\end{thm}

It follows from Theorem \ref{thm2.3} and properties of the determinant that \eqref{eq18} greater or equal to
$$
\frac{1}
{\|h\|^2\|\widetilde{A}^{-1}\|^{2p}}G({1\!\!\,{\rm I}}_{[u_k, v_1]}, \ldots, {1\!\!\,{\rm I}}_{[u_k, v_p]}, \widetilde{A}^{-1}h).
$$
Denote by $M$ the subspace of all step functions in $L.$ Let $f_1, \ldots, f_s$ be an orthonormal basis for $M$ and $e_1, \ldots, e_m $ be an orthonormal basis for the orthogonal complement of $M$ in $L.$ Note that $f_1, \ldots, f_s, e_1, \ldots, e_m$ is an orthonormal basis for $L.$ One can see that the following estimates take place.

\begin{lem}
\label{lem2.3}\cite{10}
There exists a constant $c>0$ such that the following relation holds
$$
G({1\!\!\,{\rm I}}_{[0;t_1]}, \ldots, {1\!\!\,{\rm I}}_{[0, t_k]}, f_1, \ldots, f_s, e_1, \ldots, e_m)\geq
c\ G({1\!\!\,{\rm I}}_{[0;t_1]}, \ldots, {1\!\!\,{\rm I}}_{[0, t_k]}, f_1, \ldots, f_s).
$$
\end{lem}
\begin{lem}\cite{10}
\label{lem2.4}
Let $0\leq s_1<\ldots<s_N\leq1$ be the points of jumps of functions $f_1, \ldots, f_s.$ Then there exists a constant $c_{\vec{s}}>0$ which depends on $\vec{s}=(s_1, \ldots, s_N)$ such that the following relation holds
$$
G({1\!\!\,{\rm I}}_{[0;t_1]}, \ldots, {1\!\!\,{\rm I}}_{[0, t_k]}, f_1, \ldots, f_s)\geq
c_{\vec{s}}\ G({1\!\!\,{\rm I}}_{[0;t_1]}, \ldots, {1\!\!\,{\rm I}}_{[0, t_k]}, {1\!\!\,{\rm I}}_{[0;s_1]}, \ldots, {1\!\!\,{\rm I}}_{[0, s_N]}).
$$
\end{lem}

If $\widetilde{A}^{-1}h$ is not a step function, then it follows from Lemma \ref{lem2.3} that
\begin{equation}
\label{eq19}
G({1\!\!\,{\rm I}}_{[u_k; v_1]}, \ldots,{1\!\!\,{\rm I}}_{[u_k; v_p]}, \widetilde{A}^{-1}h)\geq
c\ G({1\!\!\,{\rm I}}_{[u_k; v_1]}, \ldots,{1\!\!\,{\rm I}}_{[u_k; v_p]}).
\end{equation}
The relation \eqref{eq19} implies that \eqref{eq16} less or equal to
$$
\frac{1}
{\sqrt{c}\|h\|\|\widetilde{A}^{-1}\|^{2p}}
\int_{\Delta_{2p}(u_k; u_{k+1})}
\frac{d\vec{v}}
{\sqrt{G({1\!\!\,{\rm I}}_{[u_k; v_1]},\ldots,{1\!\!\,{\rm I}}_{[u_k; v_{2p}]} )}}=
$$
$$
=
\frac{1}
{\sqrt{c}\|h\|\|\widetilde{A}^{-1}\|^{2p}}
\int_{\Delta_{2p}(u_k; u_{k+1})}
\frac{d\vec{v}}
{\sqrt{(v_1-u_k)(v_2-v_1)\ldots(v_{2p}-v_{2p-1})}}=
$$
$$
=
\frac{\widetilde{c}(\vec{u},p)}
{\|h\|\|\widetilde{A}^{-1}\|^{2p}}
\int_{\Delta_{2p}}
\frac{d\vec{v}}
{\sqrt{v_1(v_2-v_1)\ldots(v_p-v_{p-1})}}<+\infty.
$$
Here $\widetilde{c}(\vec{u}, p)$ is positive constant which depends on $u_k, u_{k+1}$ and $p.$

If $\widetilde{A}^{-1}h$ is a step function, then it follows from Lemma \ref{lem2.4} that
\begin{equation}
\label{eq20}
G({1\!\!\,{\rm I}}_{[u_k; v_1]}, \ldots,{1\!\!\,{\rm I}}_{[u_k; v_{2p}]}, \widetilde{A}^{-1}h)\geq
c_{\vec{s}}\ G({1\!\!\,{\rm I}}_{[u_k; v_1]}, \ldots,{1\!\!\,{\rm I}}_{[u_k; v_{2p}]},{1\!\!\,{\rm I}}_{[u_k; s_1]}, \ldots,{1\!\!\,{\rm I}}_{[u_k; s_N]}),
\end{equation}
where $s_1, \ldots, s_N$ are the points of jumps of the function $\widetilde{A}^{-1}h.$ The relation \eqref{eq20} implies that \eqref{eq16} less or equal to
$$
\frac{1}{\sqrt{c_{\vec{s}}}\|h\|\|\widetilde{A}^{-1}\|^{2p}}
\int_{\Delta_{2p}(u_k; u_{k+1})}
\frac{d\vec{u}}
{\sqrt{G({1\!\!\,{\rm I}}_{[u_k; v_1]}, \ldots,{1\!\!\,{\rm I}}_{[u_k; v_{2p}]},{1\!\!\,{\rm I}}_{[u_k; s_1]}, \ldots,{1\!\!\,{\rm I}}_{[u_k; s_N]})}}.
$$
Consider the following partition of the domain $\Delta_{2p}(u_k; u_{k+1})$
$$
\Delta_{2p}(u_k; u_{k+1})=\mathop{\cup}\limits_{p=n_0+\ldots+n_N}I_{n_0\ldots n_N }(u_k; u_{k+1}),
$$
where
$$
I_{n_0\ldots n_N}(u_k; u_{k+1})=
$$
$$
=\{u_k\leq v_1\leq\ldots\leq v_{n_0}\leq s_1\leq v_{n_0+1}\leq \ldots\leq v_{n_0+n_1}\leq s_2\leq
$$
$$
\ldots\leq s_n\leq v_{n_0+\ldots+n_{N-1}\leq\ldots\leq v_p\leq u_{k+1}}\}.
$$
Note that
\begin{equation}
\label{eq21}
I_{n_0\ldots n_N}(u_k; u_{k+1})=\Delta_{n_0}(u_k; s_1)\times \Delta_{n_1}(s_1; s_2)\times\ldots\times \Delta_{n_N}(s_N; u_{k+1}).
\end{equation}
It follows from \eqref{eq21} that to finish the proof it suffices to check that
\begin{equation}
\label{eq22}
\int_{\Delta_k(s_1; s_2)}
\frac{d\vec{v}}
{\sqrt{(v_1-s_1)(v_2-v_1)\ldots(v_k-v_{k-1})(s_2-v_k)}}<+\infty.
\end{equation}
The convergence of the integral \eqref{eq22} we proved in \cite{11}. For convenience of the reader, let us now briefly recall essential points of the proof. The change of variables in the integral implies that
$$
\int_{\Delta_k(s_1; s_2)}
\frac{d\vec{v}}
{\sqrt{(v_1-s_1)(v_2-v_1)\ldots(v_k-v_{k-1})(s_2-v_k)}}=
$$
$$
=
c(\vec{s}, k)
\int_{\Delta_k}
\frac{d\vec{v}}
{\sqrt{v_1(v_2-v_1)\ldots(v_k-v_{k-1})(1-v_k)}}=
$$
$$
=
c(\vec{s}, k)
\int_{\Delta_k}
\frac{d\vec{v}}
{\sqrt{G({1\!\!\,{\rm I}}_{[0;v_1]}, \ldots,{1\!\!\,{\rm I}}_{[0;v_k]},{1\!\!\,{\rm I}}_{[0;1]})}}=
$$
\begin{equation}
\label{eq23}
=
c(\vec{s}, k)
\int_{\Delta_k}
\frac{d\vec{v}}
{\sqrt{G((I-\widetilde{P}){1\!\!\,{\rm I}}_{[0;v_1]}, \ldots,(I-\widetilde{P}){1\!\!\,{\rm I}}_{[0;v_k]},{1\!\!\,{\rm I}}_{[0;1]})}},
\end{equation}
where $c(\vec{s}, k)$ is a positive constant which depends on $s_1, s_2, k$ and $\widetilde{P}$ is a projection onto linear span generated by ${1\!\!\,{\rm I}}_{[0;1]}.$ One can check that \eqref{eq23} equals
$$
\widetilde{c}(\vec{s}, k)E(l^{\widetilde{w}})^k,
$$
where $\widetilde{w}(t)=w(t)-tw(1),\ t\in[0;1]$ is the Brownian bridge.
To finish the proof it suffices to check that
$$
\sup_{k\geq1}E(l^{\widetilde{w}})^k<+\infty.
$$
Really, note that
$$
E(l^{\widetilde{w}})^k=EE((l^{w})^k/w(1)=0).
$$
It is known that joint probability density of the random variables $l^w$ and $w(1)$ has the following representation \cite{12}
$$
p(a,b)=\frac{1}{\sqrt{2\pi}}(|b|+a)e^{-\frac{1}{2}(|b|+a)^2},\ a>0,\  b\in{\mathbb R}.
$$
Then
$$
E((l^{w})^k/w(1)=0)=\frac{\int^{+\infty}_0y^kp(y,0)dy}
{\int^{+\infty}_0p(y,0)dy}=
$$
$$
=\frac{\int^{+\infty}_0y^{k+1}e^{-\frac{y^2}{2}}dy}
{\int^{+\infty}_0ye^{-\frac{y^2}{2}}dy}
=2^{\frac{k}{2}}\Gamma(\frac{k}{2}+1),
$$
where
$$
\Gamma(z)=\int^{+\infty}_0t^{z-1}e^{-t}dt,\ z\in{\mathop{\rm Re}},\ {\mathop{\rm Re}}\,z>0
$$
is the gamma function.
\end{proof}

 \end{document}